 \newtheorem{thm}{Theorem}
 \newtheorem{cor}[thm]{Corollary}
 \newtheorem{lem}[thm]{Lemma}
\begin{document}

\title{\textbf{ \large LEFT APP-RINGS OF SKEW GENERALIZED POWER SERIES}
\thanks{Partially supported by National Natural Science Foundation of China(10961021). }}
\author{\small\textbf{Renyu Zhao  } \\
%EndAName
{\small College of Economics and Management,}\\ \small Northwest
Normal
University, Lanzhou 730070, P.R. China \\
\small E-mail: renyuzhao026@gmail.com\\}
\date{}
\maketitle

\def\abstractname{Abstract}
\begin{abstract} \noindent A ring $R$ is called a left APP-ring if the left annihilator
$l_{R}(Ra)$ is right $s$-unital as an ideal of $R$ for any $a\in R$.
Let $R$ be a ring, $(S,\leq)$ a strictly ordered monoid and
$\omega:S\longrightarrow {\rm End}(R)$ a monoid homomorphism. The
skew generalized power series ring $[[R^{S,\leq},\omega]]$ is a
common generalization of (skew) polynomial rings, (skew) power
series rings, (skew) Laurent polynomial rings, (skew) group rings,
and Malcev-Neumann Laurent series rings. We study the left
APP-property of the skew generalized power series ring
$[[R^{S,\leq},\omega]]$. It is shown that if $(S,\leq)$ is a
strictly totally ordered monoid, $\omega:S\longrightarrow {\rm
Aut}(R)$ a monoid homomorphism and $R$ a ring satisfying descending
chain condition on right annihilators, then $[[R^{S,\leq},\omega]]$
is left APP if and only if for any $S$-indexed subset $A$ of $R$,
the ideal $l_{R}\big(\sum_{a\in A}\sum_{s\in S}R\omega_{s}(a)\big)$
is right $s$-unital.

\vspace*{5mm}

\noindent \emph{Key Words:} left APP-ring, skew generalized power
series ring.
 \vspace*{3mm}

\noindent\emph{Mathematics Subject Classification:} 16W60

\end{abstract}

\vspace*{5mm}

\setcounter{section}{1}
\subsection*{1. Introduction and preliminaries}
\indent

Throughout this paper, $R$ denotes a ring (not necessarily
commutative) with unity. For a nonempty subset $X$ of $R$, $l_R(X)$
and $r_R(X)$ denote the left and right annihilator of $X$ in $R$,
respectively. We will denote by ${\rm End}(R)$ the monoid of ring
endomorphisms of $R$, and by ${\rm Aut}(R)$ the group of ring
automorphisms of $R$.

Recall that a ring $R$ is a right (resp. left) PP-ring if the right
(resp. left) annihilator of an element of $R$ is generated by an
idempotent. The ring $R$ is called a PP-ring if it is both right and
left PP. A ring $R$ is called (quasi-) Baer if the left annihilator
of every nonempty subset (every left ideal) of $R$ is generated by
an idempotent of $R$. For more details and examples of PP-rings,
Baer rings and quasi-Baer rings, see
\cite{arm74,bir-kim-park-cm-00,bir-kim-park-jpaa-01,bir-par-ja-03,fraser-nich-mj-89,hong-kim-kwak-jpaa-00}.
As a generalization of quasi-Baer rings, G.F. Birkenmeier, J.Y. Kim
and J.K. Park in \cite{bir-kim-park-comm-01} introduced the concept
of left principally quasi-Baer rings. A ring $R$ is called left
principally quasi-Baer (or simply, left p.q.-Baer) if the left
annihilator of a principal left ideal of $R$ is generated by an
idempotent. Similarly, right p.q.-Baer rings can be defined. A ring
$R$ is called p.q.-Baer if it is both right and left p.q.-Baer.
Observe that biregular rings and quasi-Baer rings are left
p.q.-Baer. According to \cite{ezeh-ijms-88}, a commutative ring $R$
is called PF-ring if the annihilator $ann_R(a)$ is pure as an ideal
of $R$ for every $a\in R$. As a common generalization of left
p.q.-Baer rings, right PP-rings and PF-rings, the concept of left
APP-rings was introduced in \cite{liu-zhao-glasgow-06}. A ring $R$
is called a left APP-ring if the left annihilator $l_R(Ra)$ is pure
as a left ideal of $R$ for every $a\in R$. For more details and
examples of left APP-rings, see \cite{liu-zhao-glasgow-06} and
\cite{hir-jpaa-02}.

There are a lot of results concerning left principal quasi-Baerness,
right PP-property and PF-property of polynomial extensions and power
series extensions of a ring. We recall some of them as follows. It
was proved in \cite[Theorem 2.1]{bir-kim-park-kmj-00} that a ring
$R$ is left p.q.-Baer if and only if $R[x]$ is left p.q.-Baer. C.Y.
Hong, N.K. Kim and T.K. Kwak showed in \cite[Corollary
15]{hong-kim-kwak-jpaa-00} that if $\sigma$ is a rigid endomorphism
of $R$, then $R$ is a left p.q.-Baer ring if and only if $R[x;
\sigma, \delta]$ is a left p.q.-Baer ring. G.F. Birkenmeier and J.K.
Park in \cite[Theorem 1.2]{bir-par-ja-03} showed that if $M$ is a
u.p.-monoid, then $R[M]$ is left p.q.-Baer if and only if $R$ is
left p.q.-Baer. For skew monoid rings it was proved in \cite[Theorem
5]{liu-yang-glasgow-10} that if $M$ is an ordered monoid and
$\phi:M\longrightarrow {\rm Aut}(R)$ a monoid homomorphism, then the
skew monoid ring $R*M$ induced by the monoid homomorphism $\phi$ is
a left p.q.-Baer ring if and only if $l_{R}\big(\sum_{g\in
M}R\phi(g)(a)\big)$ is generated by an idempotent of $R$ for every
$a\in R$. If all right semicentral idempotents of $R$ are central,
then it was proved in \cite[Theorem 3]{liu-comm-02} that $R[[x]]$ is
left p.q.-Baer if and only if $R$ is left p.q.-Baer and every
countable family of idempotents of $R$ has a generalized join in the
set of all idempotents of $R$. It was proved in \cite[Theorem
A]{arm74} that $R$ is a reduced PP-ring if and only if $R[x]$ is a
reduced PP-ring. J. Fraser and W.K. Nicholson in
\cite{fraser-nich-mj-89} showed that $R[[x]]$ is a reduced PP-ring
if and only if $R$ is a reduced PP-ring and every countable family
of idempotents of $R$ has a least upper bound in the set of all
idempotents of $R$. It was proved in \cite{ezeh-ijms-88} that
$R[[x]]$ is a PF-ring if and only if for any two countable subsets
$A$ and $B$ of $R$ with $A\subseteq ann_{R}(B)$, there exists $r\in
ann_{R}(B)$ such that $ar=a$ for all $a\in A$.

In recent years, many researches have carried out an extensive study
of rings of (skew) generalized power series (for example, P.
Ribenboim\cite{rib-ja-92,rib-ja-95,rib-ja-97}, Z.K.
Liu\cite{liu-ahsan-ams-00,liu-glasgow-02,liu-annals-02,liu-ams-06},
H. Kim\cite{kim-jpaa-02,kim-kwon-kyungpook-07}, R. Mazurek and M.
Ziembowski\cite{marks-mazurk-ziembowski-bams-10,mazurk-ziembowski-ja-06,mazurk-ziembowski-ja-07,mazurk-ziembowski-comm-08},
and the present author \cite{zhao-liu-tjm-08,zhao-jiao-tjm}, etc.).
In particular, it was shown in \cite[Corollary 3.8]{zhao-jiao-tjm}
that if $(S,\leq)$ is a strictly totally ordered monoid and $R$ a
ring satisfying the condition that $ab=0\Longleftrightarrow
a\omega_{s}(b)=0$ for any $a,b\in R$ and any $s\in S$, then
$[[R^{S,\leq},\omega]]$ is a left p.q.-Baer ring if and only if for
any $S$-indexed set $A$ of $R$, $l_{R}(\sum_{a\in A}Ra)$ is
generated by an idempotent of $R$. In \cite[Corollary
5.5]{zhao-liu-tjm-08}, we proved that $[[R^{S,\leq}]]$ is a reduced
PP-ring if and only if $R$ is a reduced PP-ring and for every
$S$-indexed subset $A$ of idempotents of $R$, $ann_{R}(A)$ is
generated by an idempotent of $R$ if and only if $R$ is a reduced
PP-ring and any $S$-indexed subset of idempotents of $R$ has a least
upper bound in the set of all idempotents of $R$. H. Kim and T.I.
Kwon proved in \cite[Theorem 2.4]{kim-kwon-kyungpook-07} that if
$(S,\leq)$ is a strictly totally ordered monoid, then
$[[R^{S,\leq}]]$ is a PF-ring if and only if for any two $S$-indexed
subsets $A$ and $B$ of $R$ with $A\subseteq ann_{R}(B)$, there
exists $r\in ann_{R}(B)$ such that $ar=a$ for all $a\in A$.

For left APP-rings, it was proved in \cite[Theorem
2]{liu-yang-glasgow-10} that if $M$ is an ordered monoid and
$\phi:M\longrightarrow {\rm Aut}(R)$ is a monoid homomorphism, then
the skew monoid ring $R*M$ is a left APP-ring if and only if for any
$b\in R$, $l_{R}\big(\sum_{g\in M}R\phi(g)(b)\big)$ is pure as a
left ideal of $R$. It was noted in \cite[Example
2.4]{liu-zhao-glasgow-06} that there exists a commutative von
Neumann regular ring $R$ (hence left APP), but the ring $R[[x]]$ is
not APP. In \cite[Theorem 2]{liu-yang-am-06}, it was shown that if
$R$ is a ring satisfying descending chain condition on right
annihilators then $R[[x,\alpha]]$ is a left APP-ring if and only if
for any sequence $(b_{0},b_{1},\ldots)$ of elements of $R$ the ideal
$l_{R}\big(\sum_{j=0}^{\infty}\sum_{k=0}^{\infty}R\alpha^{k}(b_{j})\big)$
is pure as a left ideal of $R$, where $\alpha\in {\rm Aut}(R)$.

In this note, we will consider left APP-property of skew generalized
power series rings. We will show that if $(S,\leq)$ is a strictly
totally ordered monoid, $\omega:S\longrightarrow {\rm Aut}(R)$ a
monoid homomorphism and $R$ is a ring satisfying descending chain
condition on right annihilators, then $[[R^{S,\leq},\omega]]$ is
left APP if and only for any $S$-indexed subset $A$ of $R$, the
ideal $l_{R}\big(\sum_{a\in A}\sum_{s\in S}R\omega_{s}(a)\big)$ is
pure as a left ideal of $R$.

In order to recall the skew generalized power series ring
construction, we need some definitions. Let $(S,\leq)$ be a
partially ordered set. Recalled that $(S,\leq)$ is artinian if every
strictly decreasing sequence of elements of $S$ is finite, and that
$(S,\leq)$ is narrow if every subset of pairwise order-incomparable
elements of $S$ is finite. Let $S$ be a commutative monoid. Unless
stated otherwise, the operation of $S$ shall be denoted additively,
and the neutral element by 0. The following definition is due to
\cite{rib-ja-97},\cite{liu-ams-06} and
\cite{mazurk-ziembowski-comm-08}.

Let $R$ be a ring, $(S,\leq)$ a strictly ordered monoid (that is,
$(S,\leq)$ is an ordered monoid such that if $s,s',t\in S$ and
$s<s'$, then $s+t<s'+t$), and $\omega:S\longrightarrow {\rm End}(R)$
a monoid homomorphism. For any $s\in S$, let $\omega_s$ denote the
image of $s$ under $\omega$, that is $\omega_s=\omega(s)$. Consider
the set $A$ of all maps $f:S \longrightarrow R$ whose support
$\mathrm{supp}(f)=\{s\in S\mid f(s)\neq 0\}$ is artinian and narrow.
Then for any $s\in S$ and $f,g\in A$ the set $$X_{s}(f,g)=\{(u,v)\in
S\times S\mid u+v=s,f(u)\neq 0,g(v)\neq 0\}$$ is finite. This fact
allows to define the operation of convolution as
follows:$$(fg)(s)=\sum\limits_{(u,v)\in X_{s}(f,g)}
f(u)\omega_u\big(g(v)\big),\qquad {\rm if} \ \
X_{s}(f,g)\neq\emptyset$$ and $(fg)(s)=0$ if $X_{s}(f,g)=\emptyset$.
With this operation and pointwise addition, $A$ becomes a ring,
which is called the ring of skew generalized power series with
coefficients in $R$ and exponents in $S$, and we denote by
$[[R^{S,\leq},\omega]]$.

The skew generalized power series construction embraces a wide range
of classical ring-theoretic extensions, including skew polynomial
rings, skew power series rings, skew Laurent polynomial rings, skew
group rings, Malcev-Neumann Laurent series rings and of course the
"untwisted" versions of all of these.

If $(S,\leq)$ is a strictly totally ordered monoid and $0\neq
f\in[[R^{S,\leq},\omega]]$, then supp$(f)$ is a nonempty
well-ordered subset of $S$. We denote $\pi(f)$ the smallest element
of supp$(f)$. To any $r\in R$ and any $s\in S$ we associated the
maps $\lambda_{r}^{s}\in [[R^{S,\leq},\omega]]$ defined by
$$\lambda_{r}^{s}(t)=\bigg\{\begin{array}{cc}
  r, & t=s, \\
  0, & t\neq s, \\
\end{array}\quad  t\in S.$$ In particular, denote $c_{r}=\lambda_{r}^{0},\;e_{s}=\lambda_{1}^{s}$. It is clear
that $r\longmapsto c_{r}$ is a ring embedding of $R$ into
$[[R^{S,\leq},\omega]]$, $s\longmapsto e_{s}$ is a monoid embedding
of $S$ into the multiplicative monoid of ring
$[[R^{S,\leq},\omega]]$, and $\lambda_{r}^{s}=c_re_s$,
$e_sc_r=c_{\omega_{s}(r)}e_s$.

\setcounter{section}{1}
\subsection*{2. Main Results}
\indent

 An ideal $I$ of $R$ is said to be  right $s$-unital if,
for each $a\in I$ there exists an element $x\in I$ such that $ax=a$.
Note that if $I$ and $J$ are right $s$-unital ideals, then so is
$I\cap J$ (if $a\in I\cap J$, then $a\in aIJ\subseteq a(I\cap J)$).
It follows from \cite[Theorem 1]{tom76} that $I$ is right $s$-unital
if and only if for any finitely many elements $a_1, a_2, \ldots,
a_n\in I$ there exists an element $x\in I$ such that $a_i=a_ix, i=1,
2, \ldots, n$. A submodule $N$ of a left $R$-module $M$ is called a
 pure submodule if $L\otimes_RN\longrightarrow L\otimes_RM$ is
a monomorphism for every right $R$-module $L$. By \cite[Proposition
11.3.13]{ste75}, an ideal $I$ is right $s$-unital if and only if
$R/I$ is flat as a left $R$-module if and only if $I$ is pure as a
left ideal of $R$.

By \cite{liu-zhao-glasgow-06}, a ring $R$ is called a left APP-ring
if the left annihilator $l_R(Ra)$ is right $s$-unital as an ideal of
$R$ for any element $a\in R$.

Right APP-rings may be defined analogously. Clearly every left
p.q.-Baer ring is a left APP-ring (thus the class of left APP-rings
includes all biregular rings and all quasi-Baer rings). If $R$ is a
commutative ring, then $R$ is APP if and only $R$ is FP. From
\cite[Proposition 2.3]{liu-zhao-glasgow-06} it follows that right
PP-rings are left APP and left APP-rings are quasi-Armendariz in the
sense that whenever $f(x)=a_0+a_1x+\cdots +a_mx^m$,
$g(x)=b_0+b_1x+\cdots +b_nx^n\in R[x]$ satisfy $f(x)R[x]g(x)=0$, we
have  $a_iRb_j=0$ for each $i$ and $ j$ (see, for example
\cite{hir-jpaa-02}). For more details on left APP-rings, see
\cite{liu-zhao-glasgow-06,hir-jpaa-02}.

\begin{lem}
Let $(S,\leq)$ be a strictly totally ordered monoid and
$\omega:S\longrightarrow{\rm Aut}(R)$ a monoid homomorphism. If
$l_R\big(\sum_{s\in S}R\omega_s(a)\big)$ is right $s$-unital for any
$a\in R$, then for any $f,g\in [[R^{S,\leq},\omega]]$ satisfy
$g[[R^{S,\leq},\omega]]f=0$, $g(u)\omega_u(R\omega_s(f(v)))=0$ for
any $u,v,s\in S$.\end{lem}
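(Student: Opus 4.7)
The plan is to reduce the hypothesis $g\,[[R^{S,\leq},\omega]]\,f = 0$ to a family of coefficient identities and then run a transfinite induction on the anti-diagonal sum $u+v$, with each inductive step resolved by a finite cascade driven by the right $s$-unital hypothesis.

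Plugging $h = c_r e_s$ (for $r \in R$, $s \in S$) into $ghf = 0$ and computing the convolution using $\omega_{u+s} = \omega_u \circ \omega_s$ yields, for every $r \in R$, $s \in S$, and every $N \in \mathrm{supp}(g) + \mathrm{supp}(f)$,
\[
\sum_{\substack{(u,v)\, \in\, \mathrm{supp}(g)\times\mathrm{supp}(f)\\ u+v\,=\,N}} g(u)\,\omega_u(r)\,\omega_{u+s}(f(v)) \;=\; 0. \qquad (\ast)
\]
Each sum is finite by the artinian--narrow condition on supports, and $\mathrm{supp}(g)+\mathrm{supp}(f)$ is a well-ordered subset of $S$. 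One then proceeds by transfinite induction on $N$ in this well-ordered set, aiming to show that for every $(u,v)\in\mathrm{supp}(g)\times\mathrm{supp}(f)$ with $u+v = N$, $g(u)\,R\,\omega_{u+s}(f(v)) = 0$ for all $s \in S$; since $\omega_u \in \mathrm{Aut}(R)$ this is equivalent to $g(u)\,\omega_u(R\omega_s(f(v))) = 0$, and the cases $u \notin \mathrm{supp}(g)$ or $v \notin \mathrm{supp}(f)$ are trivial. The base case $N = \min\mathrm{supp}(g) + \min\mathrm{supp}(f)$ has a single summand in $(\ast)$ and is immediate.

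For the inductive step, enumerate the finitely many pairs on anti-diagonal $N$ as $(u_1',v_1'), \ldots, (u_k',v_k')$ with $u_1' < \cdots < u_k'$ (and correspondingly $v_1' > \cdots > v_k'$ by the strict order), and run a descending induction on $i = k, k-1, \ldots, 1$ starting from $(\ast)$. At step $i$, for each $j < i$ the outer IH applied to the pair $(u_j', v_i')$ (whose sum $u_j' + v_i' < u_i' + v_i' = N$ by strict cancellativity) gives $\omega_{u_j'}^{-1}(g(u_j')) \in I_i := l_{R}\!\left(\sum_{t\in S} R\,\omega_t(f(v_i'))\right)$. Since $I_i$ is right $s$-unital by hypothesis, there is a common $e_i \in I_i$ with $g(u_j')\,\omega_{u_j'}(e_i) = g(u_j')$ for all $j = 1,\ldots,i-1$. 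Substituting $r \mapsto e_i r$ in the current step-$i$ relation preserves the $j < i$ summands but annihilates the $j = i$ one (since $e_i\, r\, \omega_s(f(v_i')) = 0$); subtracting from the original step-$i$ relation isolates $g(u_i')\,\omega_{u_i'}(r)\,\omega_{u_i'+s}(f(v_i')) = 0$, and the reduced relation feeds step $i-1$. At $i = 1$ the sole surviving relation gives the pair $(u_1',v_1')$ directly.

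The main obstacle is calibrating the cascade: one must verify that the outer IH is strong enough to place each $\omega_{u_j'}^{-1}(g(u_j'))$ (for $j < i$) inside the $s$-unital ideal $I_i$, which relies on strict cancellativity of the ordered monoid, and that the substitution $r \mapsto e_i r$ annihilates exactly the $j = i$ term while leaving the earlier summands intact, which uses crucially that $\omega$ takes values in $\mathrm{Aut}(R)$ so that the $s$-unital identity transports cleanly through $\omega_{u_j'}$.
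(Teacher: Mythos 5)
Your proposal is correct and follows essentially the same route as the paper's proof: the same reduction of $g\lambda_r^s f=0$ to the coefficient identities, the same transfinite induction on the anti-diagonal sum $u+v$, and the same finite elimination cascade in which the right $s$-unital hypothesis supplies a common local unit $e_i$ that fixes the surviving summands while the annihilator condition kills one term per step (you merely index by increasing $u$ where the paper indexes by increasing $v$, so the terms are eliminated in the same order). The back-substitution/subtraction step you use to isolate each $T_i(r)=0$ is the same mechanism the paper invokes with its concluding ``consequently.''
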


\begin{proof} Let $0\neq f,g\in [[R^{S,\leq},\omega]]$ be
such that $g[[R^{S,\leq},\omega]] f=0$. Assume that $\pi(g)=u_{0}$
and $\pi(f)=v_{0}$. Then for any $(u,v)\in X_{u_{0}+v_{0}}(g,f)$,
$u_{0}\leq u,\ v_{0}\leq v.$ If $u_{0}<u$, since $\leq$ is a strict
order, $u_{0}+v_{0}<u+v_{0}\leq u+v=u_{0}+v_{0},$ a contradiction.
Thus $u=u_{0}$. Similarly, $v=v_{0}.$ Hence, for any $r\in R$ and
any $s\in S$,
$$\begin{aligned}0=(g \lambda_{r}^{s} f)(u_{0}+s+v_{0})&=\sum_{(u,v)\in
X_{u_{0}+s+v_{0}}(g,\lambda_{r}^{s}f)}g(u)\omega_u\big(r\omega_s(f(v))\big)\\
&=g(u_{0})\omega_{u_{0}}\big(r\omega_s(f(v_{0}))\big).\end{aligned}$$

Now let $w\in S$ with $u_{0}+v_{0}\leq w$. Assume that for any $u\in
\text{supp}(g)$ and any $v\in \text{supp}(f)$, if $u+v<w$, then
$g(u)\omega_u(R\omega_s(f(v)))=0$ for any $s\in S$. We will show
that $g(u)\omega_u(R\omega_s(f(v)))=0$ for any $s\in S$, any $u\in
\text{supp}(\phi)$ and any $ v\in \text{supp}(f)$ with $u+v=w$. For
convenience, we write
$$X_{w}(g,f)=\{(u_{i},v_{i})\mid
i=1,2,\ldots,n\}$$ with $v_{1}<v_{2}<\cdots<v_{n}$ (Note that if
$v_{1}=v_{2}$, then from $u_{1}+v_{1}=u_{2}+v_{2}$ it follows that
$u_{1}=u_{2}$, and thus $(u_{1},v_{1})=(u_{2},v_{2})$). Then for any
$r\in R$ and any $s\in S$,
$$\begin{aligned}0=(g \lambda_{r}^sf)(s+w)&=\sum_{(u,v)\in
X_{w}(g,\lambda_{r}^{s}f)}g(u)\omega_u\big(r\omega_{s}(f(v))\big)\\
&=\sum_{i=1}^{n}g(u_{i})\omega_{u_{i}}\big(r\omega_{s}(f(v_{i}))\big).
\end{aligned}\eqno(1)$$ Note that $u_i+v_1<u_i+v_i=w$ for each $i=2,\ldots,n$. Then by
induction hypothesis, $g(u_i)\omega_{u_i}(R\omega_t(f(v_1)))=0$ for
any $t\in S$ and each $i=2,\ldots,n$. Thus
$\omega_{u_i}^{-1}(g(u_i))\in l_R\big(\sum_{t\in
S}R\omega_t(f(v_1))\big)$ since $\omega_{u_i}\in {\rm Aut}(R)$ for
any $i=2,\ldots,n.$ Hence there exists $e_{1}\in l_R\big(\sum_{t\in
S}R\omega_t(f(v_1))\big)$ such that $g(u_i)=g(u_i)\omega_{u_i}(e_1)$
for $i=2,\ldots,n$ by the hypothesis. Let $r'\in R$, take
$r=e_{1}r'$ in the equation (1), we have
$$0=\sum_{i=1}^{n}g(u_{i})\omega_{u_{i}}\big(e_{1}r'\omega_{s}(f(v_{i}))\big)
=\sum_{i=2}^{n}g(u_{i})\omega_{u_{i}}\big(r'\omega_{s}(f(v_{i}))\big).\eqno(2)$$
Since $u_{i}+v_{2}<u_{i}+v_{i}=w$ for any $i=3,\ldots,n$, by
hypothesis, there exists $e_{2}\in l_R\big(\sum_{t\in
S}R\omega_t(f(v_2))\big)$ such that $g(u_i)=g(u_i)\omega_{u_i}(e_2)$
for $i=3,\ldots,n$. Hence take $r'=e_{2}r''$ in (2) where $r''\in
R$, we deduced that
$$\sum_{i=3}^{n}g(u_{i})\omega_{u_{i}}\big(r''\omega_{s}(f(v_{i}))\big)=0.$$
Continuing in this manner yields that
$g(u_{n})\omega_{u_{n}}\big(R\omega_{s}(f(v_{n}))\big)=0$ for any
$s\in S$. Consequently, for any $s\in S$,
$$g(u_{n-1})\omega_{u_{n-1}}\big(R\omega_{s}(f(v_{n-1}))\big)=0,\ldots,g(u_{1})\omega_{u_{1}}\big(R\omega_{s}(f(v_{1}))\big)=0.$$

Therefore, by transfinite induction, we have shown that
$g(u)\omega_{u}\big(R\omega_{s}(f(v))\big)=0$ for any $u,v,s\in S$.
\end{proof}

\begin{lem} Let $(S,\leq)$ be a strictly ordered monoid and $\omega:S\longrightarrow{\rm End}(R)$ a monoid
homomorphism. If $[[R^{S,\leq},\omega]]$ is a left APP-ring and $S$
is cancellative, then $l_R\big(\sum_{s\in S}R\omega_s(a)\big)$ is
right $s$-unital for any $a\in R$.
\end{lem}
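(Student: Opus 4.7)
The plan is to transfer the right $s$-unital property from $T:=[[R^{S,\leq},\omega]]$ down to $R$ via the coefficient embedding $r\mapsto c_r$. Fix $a\in R$ and let $b\in l_R\bigl(\sum_{s\in S}R\omega_s(a)\bigr)$; the goal is to produce an element $x$ of the same ideal with $bx=b$.

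The first step is the identification of left annihilators across the embedding. A direct expansion of the convolution product, using $\omega_0={\rm id}_R$ and the fact that $c_a$ and $c_b$ are supported only at $0$, gives the formula $(c_bFc_a)(t)=b\cdot F(t)\cdot\omega_t(a)$ for every $F\in T$ and $t\in S$. Testing this with $F=\lambda_r^s$ as $r$ and $s$ range over $R$ and $S$, one sees that $c_b\in l_T(Tc_a)$ if and only if $br\omega_s(a)=0$ for all $r\in R$ and $s\in S$; the latter is exactly our assumption on $b$. Consequently $c_b\in l_T(Tc_a)$, and the left APP hypothesis on $T$ produces $K\in l_T(Tc_a)$ such that $c_bK=c_b$. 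Evaluating this last equation at $0\in S$ collapses the convolution (since $c_b$ is supported at $0$ only) to $bK(0)=b$, so setting $x:=K(0)$ yields $bx=b$.

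The remaining task, and the place where cancellativity of $S$ is essential, is to prove $x\in l_R\bigl(\sum_{s\in S}R\omega_s(a)\bigr)$. For each $r\in R$ and $s\in S$, apply the relation $K\lambda_r^sc_a=0$ at coordinate $s$: the formula from the first step specializes to $(K\lambda_r^sc_a)(s)=(K\lambda_r^s)(s)\,\omega_s(a)$, and $(K\lambda_r^s)(s)=\sum_{u+s=s}K(u)\omega_u(r)$. Cancellativity of $S$ forces $u=0$ as the unique solution of $u+s=s$, whence $(K\lambda_r^s)(s)=K(0)\cdot r=xr$ and therefore $xr\omega_s(a)=0$. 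Since $r$ and $s$ were arbitrary, $x\in l_R\bigl(\sum_{s\in S}R\omega_s(a)\bigr)$, completing the verification of right $s$-unitality. The main obstacle is precisely this last step: without cancellativity, additional indices $u\neq 0$ with $u+s=s$ could appear in the sum and obstruct the isolation of the $K(0)$-term, so the cancellativity hypothesis cannot be dispensed with in this pullback argument.
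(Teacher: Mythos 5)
Your proposal is correct and follows essentially the same route as the paper: embed via $r\mapsto c_r$, observe $c_b\in l_T(Tc_a)$, apply the left APP hypothesis to get $K$ with $c_bK=c_b$, read off $x=K(0)$ at coordinate $0$, and evaluate $K\lambda_r^sc_a=0$ at coordinate $s$ to see $xr\omega_s(a)=0$. Your explicit isolation of where cancellativity enters (forcing $u=0$ in $u+s=s$) is exactly the step the paper's proof uses implicitly in computing $(h\lambda_r^sc_a)(s)=h(0)r\omega_s(a)$.
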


\begin{proof} Let $a\in R$ and $b\in l_R\big(\sum_{s\in S}R\omega_s(a)\big)$. Then $c_b[[R^{S,\leq},\omega]]c_a=0$.
Since $[[R^{S,\leq},\omega]]$ is left APP, there exists an $h\in
l_{[[R^{S,\leq},\omega]]}\big([[R^{S,\leq},\omega]]c_a\big)$ such
that $c_b=c_bh$. Then $b=c_b(0)=(c_bh)(0)=bh(0)$ and, for any $r\in
R$, any $s\in S$,
$$0=(h\lambda_{r}^{s}c_a)(s)=h(0)r\omega_s(a),$$ which imply that
$l_R\big(\sum_{s\in S}R\omega_s(a)\big)$ is right $s$-unital for any
$a\in R$.
\end{proof}

Let $(S,\leq)$ be a strictly ordered monoid and $A$ a nonempty
subset of $R$. We will say $A$ is $S$-indexed, if there exists an
artinian and narrow subset $I$ of $S$ such that $A$ is indexed by
$I$.

\begin{thm} Let $(S,\leq)$ be a strictly totally ordered monoid and
$\omega:S\longrightarrow{\rm Aut}(R)$ a monoid homomorphism. If $R$
satisfies descending chain condition on right annihilators, then the
following conditions are equivalent:
\\\indent {\rm (1)} $[[R^{S,\leq},\omega]]$ is a left APP-ring.
\\\indent {\rm (2)} For any $S$-indexed subset $A$ of $R$, $l_R\big(\sum_{a\in A}\sum_{s\in S}R\omega_s(a)\big)$ is
right $s$-unital.
 \end{thm}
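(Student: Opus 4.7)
The plan is to establish the two implications separately; only (2)$\Rightarrow$(1) actually needs the descending chain condition on right annihilators. Throughout, write $T=[[R^{S,\leq},\omega]]$ and, for a subset $A\subseteq R$, abbreviate
\[
\mathfrak{a}(A)=l_R\Bigl(\sum_{a\in A}\sum_{s\in S}R\omega_s(a)\Bigr).
\]
Note that a strictly totally ordered monoid is cancellative, so in the proof of (1)$\Rightarrow$(2) Lemma 2 supplies the single-element hypothesis needed by Lemma 1, while in (2)$\Rightarrow$(1) that hypothesis is just condition (2) applied to singleton subsets of $R$.

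For (1)$\Rightarrow$(2), given an $S$-indexed subset $A=\{a_i:i\in I\}$ with $I\subseteq S$ artinian and narrow, define $f\in T$ by $f(i)=a_i$ for $i\in I$ and $f(s)=0$ otherwise. For any $b\in\mathfrak{a}(A)$, a short coefficient computation shows $c_b\in l_T(Tf)$: each coefficient of a product $c_bpf$ is a sum of terms $b\cdot p(t)\omega_t(f(v))\in b\cdot R\omega_t(a_v)=0$. By (1) choose $h\in l_T(Tf)$ with $c_b=c_bh$; comparing $0$-coefficients yields $b=bh(0)$. Applying Lemma 1 to $h$ and $f$ (at $u=0$ and using $\omega_0=\mathrm{id}_R$) gives $h(0)\in\mathfrak{a}(A)$, which is the desired $s$-unital witness for $b$.

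For (2)$\Rightarrow$(1), fix $f\in T$ and $g\in l_T(Tf)$, and let $A=\{f(v):v\in\mathrm{supp}(f)\}$, which is $S$-indexed. Lemma 1 gives $g(u)\omega_u(R\omega_s(f(v)))=0$ for all $u,v,s\in S$, and since each $\omega_u$ is an automorphism this rewrites as $\omega_u^{-1}(g(u))\in\mathfrak{a}(A)$ for every $u\in\mathrm{supp}(g)$. Put $C=\{\omega_u^{-1}(g(u)):u\in\mathrm{supp}(g)\}\subseteq\mathfrak{a}(A)$. If one can produce a single element $e\in\mathfrak{a}(A)$ with $ce=c$ for every $c\in C$, then $h:=c_e$ works: $gh=g$ because $(gc_e)(u)=g(u)\omega_u(e)$ reduces, after applying $\omega_u^{-1}$, to $ce=c$ with $c=\omega_u^{-1}(g(u))$; and $h\in l_T(Tf)$ because each coefficient of $c_epf$ has the form $e\cdot p(t)\omega_t(f(v))$, which vanishes since $e\in\mathfrak{a}(A)$.

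The main obstacle is producing this uniform $e$ when $C$ is infinite, since the right $s$-unital property of $\mathfrak{a}(A)$ only supplies witnesses for finitely many elements at a time. This is where the DCC hypothesis on right annihilators of $R$ enters. Among the right annihilators $r_R(F)$ with $F$ ranging over finite subsets of $C$, pick a minimal one $r_R(F_0)$; by the finitary characterization of right $s$-unitality cited in the excerpt we obtain $e\in\mathfrak{a}(A)$ with $ce=c$ for each $c\in F_0$, equivalently $1-e\in r_R(F_0)$. For an arbitrary $c\in C$, minimality forces $r_R(F_0\cup\{c\})=r_R(F_0)$, and therefore $1-e\in r_R(F_0\cup\{c\})$, which yields $c(1-e)=0$, i.e.\ $ce=c$. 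This supplies the uniform $e$ and completes the argument.
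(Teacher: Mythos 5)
Your proposal is correct and follows essentially the same route as the paper: the (1)$\Rightarrow$(2) direction via the series $f$ built from the $S$-indexed set and Lemmas 1--2, and the (2)$\Rightarrow$(1) direction via Lemma 1 followed by the DCC-minimality argument on right annihilators of finite subsets of $\{\omega_u^{-1}(g(u))\}$ to extract a uniform right unit $e$ with $c_e\in l_T(Tf)$ and $g=gc_e$. The only (accurate) addition is your explicit remark that strict total ordering forces $S$ to be cancellative, which the paper leaves implicit when invoking Lemma 2.
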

\begin{proof} (2)$\Longrightarrow$ (1). Assume that
$f,g\in[[R^{S,\leq},\omega]]$ are such that
$g[[R^{S,\leq},\omega]]f=0$. Then, by the hypothesis and Lemma 1,
$g(u)\omega_u(R\omega_s(f(v)))=0$ for any $u,v,s\in S$. Since
$\omega_u\in {\rm Aut}(R)$, $\omega_u^{-1}(g(u))R\omega_s(f(v))=0$
for any $u,v,s\in S$. Thus for any $u\in {\rm supp}(g)$,
$$\omega_u^{-1}(g(u))\in l_R\Big(\sum_{v\in {\rm supp}(f)}\sum_{s\in
S}R\omega_s(f(v))\Big).$$ Let $$\mathscr{D}=\big\{r_R(Y)|Y\subseteq
\{\omega_u^{-1}(g(u))|u\in {\rm supp}(g)\},\;|Y|<\infty\big\}.$$
Then $\mathscr{D}$ is a nonempty set of right annihilators. Since
$R$ satisfies descending chain condition on right annihilators,
$\mathscr{D}$ has a minimal element, say $r_R(Y_0)$. Assume that
$Y_0=\{\omega_{u_1}^{-1}(g(u_1)),\omega_{u_2}^{-1}(g(u_2)),\ldots,\omega_{u_n}^{-1}(g(u_n))\}$.
Then $$\omega_{u_i}^{-1}(g(u_i))\in l_R\Big(\sum_{v\in {\rm
supp}(f)}\sum_{s\in S}R\omega_s(f(v))\Big),\;\;i=1,2,\ldots,n.$$
Thus, by (2), there exists $e\in l_R\Big(\sum_{v\in {\rm
supp}(f)}\sum_{s\in S}R\omega_s(f(v))\Big)$ such that
$$\omega_{u_i}^{-1}(g(u_i))=\omega_{u_i}^{-1}(g(u_i))e,\;\;
i=1,2,\ldots,n.$$ If ${\rm supp}(g)=\{u_1,u_2,\ldots,u_n\}$, then
for all $u\in{\rm supp}(g),
\omega_{u}^{-1}(g(u))=\omega_{u}^{-1}(g(u))e$. Now assume that
$u\in{\rm supp}(g)\backslash\{u_1,u_2,\ldots,u_n\}$. Then, by the
minimality of $r_R(Y_0)$,
$$r_R\big(\omega_{u_1}^{-1}(g(u_1)),\ldots,\omega_{u_n}^{-1}(g(u_n)),\omega_{u}^{-1}(g(u))\big)=
r_R\big(\omega_{u_1}^{-1}(g(u_1)),\ldots,\omega_{u_n}^{-1}(g(u_n))\big).$$
Thus $\omega_{u}^{-1}(g(u))=\omega_{u}^{-1}(g(u))e$. This implies
that $ \omega_{u}^{-1}(g(u))=\omega_{u}^{-1}(g(u))e$ for any
$u\in{\rm supp}(g)$. Thus for any $h\in[[R^{S,\leq},\omega]]$ and
any $t\in S$,
$$(c_ehf)(t)=\sum_{(s,v)\in X_{t}(h,f)}eh(s)\omega_{s}(f(v))=0,$$
and
$$(gc_e)(t)=g(t)\omega_{t}(e)=\omega_{t}\big(\omega_{t}^{-1}(g(t))e\big)=\omega_{t}\big(\omega_{t}^{-1}(g(t))\big)=g(t),$$
which imply that $c_e\in
l_{[[R^{S,\leq},\omega]]}\big([[R^{S,\leq},\omega]]f\big)$ and
$g=gc_e$. Hence $[[R^{S,\leq},\omega]]$ is a left APP-ring.

(1)$\Longrightarrow$ (2). Let $A=\{a_t| t\in I\}$ be an $S$-indexed
subset of $R$. Define $f\in [[R^{S,\leq},\omega]]$ via
$$f(t)=\bigg\{\begin{array}{cc}
  a_{t},              & t\in I ,\\
  0,                  & t\notin I. \\
\end{array}$$
Let $b\in l_R\big(\sum_{t\in I}\sum_{s\in S}R\omega_s(a_t)\big)$.
Then $c_b[[R^{S,\leq},\omega]]f=0$. Since $[[R^{S,\leq},\omega]]$ is
left APP, there exists an $h\in
l_{[[R^{S,\leq},\omega]]}\big([[R^{S,\leq},\omega]]f\big)$ such that
$c_b=c_bh$. Thus $b=c_b(0)=(c_bh)(0)=bh(0)$. By (1), Lemma 2 and
Lemma 1, $h(u)\omega_u\big(R\omega_s(f(t))\big)=0$ for any $u,s,t\in
S$. In particular, $h(0)R\omega_s(f(t))=0$ for any $s,t\in S$. This
implies that $h(0)\in l_R\big(\sum_{t\in I}\sum_{s\in
S}R\omega_s(f(t))\big)$. Thus (2) holds.
\end{proof}

\begin{cor} {\rm(\cite[Theorem 2 ]{liu-yang-am-06})} Let $R$ be a ring
satisfying descending chain condition on right annihilators and
$\alpha\in {\rm Aut}(R)$. Then the following conditions are
equivalent:
\\\indent {\rm (1)} $R[[x;\alpha]]$ is a left APP-ring.
\\\indent {\rm (2)} For any countable subset $A$ of $R$, $l_R\big(\sum_{a\in A}\sum_{i=0}^{\infty}R\alpha^i(a)\big)$ is
right $s$-unital.\end{cor}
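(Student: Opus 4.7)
The plan is to derive the corollary as a direct specialization of Theorem 3 to the monoid $S=\mathbb{N}$ with its natural ordering, and $\omega:\mathbb{N}\longrightarrow {\rm Aut}(R)$ given by $\omega_i=\alpha^i$. The skew power series ring $R[[x;\alpha]]$ is then canonically isomorphic to $[[R^{\mathbb{N},\leq},\omega]]$ via $f\longmapsto \sum_{i\geq 0}f(i)x^i$, with the twist built into the convolution product matching the relation $xr=\alpha(r)x$. So everything reduces to checking that the hypotheses of Theorem 3 hold in this case and that the combinatorial data in (2) translates as claimed.

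First I would verify that $(\mathbb{N},\leq)$ is a strictly totally ordered monoid: the order is total, addition by any $t$ is strictly order-preserving, and the identity $0$ is the least element. Next, since $\mathbb{N}$ is totally ordered and well-ordered, every subset of $\mathbb{N}$ is automatically artinian and narrow. Consequently an $\mathbb{N}$-indexed subset of $R$ is exactly a subset of $R$ indexed by some subset of $\mathbb{N}$, which is precisely the same notion as a countable (in the broad sense: finite or countably infinite) subset of $R$. This identification is what allows condition (2) of Theorem 3 to be restated in the corollary's form.

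With these identifications, $\sum_{s\in S}R\omega_s(a)$ becomes $\sum_{i=0}^{\infty}R\alpha^i(a)$, and $\sum_{a\in A}\sum_{s\in S}R\omega_s(a)$ becomes $\sum_{a\in A}\sum_{i=0}^{\infty}R\alpha^i(a)$. Condition (2) of Theorem 3 then reads exactly as condition (2) of the corollary. Applying Theorem 3 yields the equivalence (1) $\Longleftrightarrow$ (2).

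I do not anticipate a main obstacle here; this is a routine specialization, and the only mild point to be careful about is the translation between ``$S$-indexed subset'' (which is a priori a definition involving artinian and narrow index sets) and the classical notion of a countable subset. This is clean precisely because $\mathbb{N}$ is itself artinian and narrow, so no subset of $\mathbb{N}$ violates the index-set conditions.
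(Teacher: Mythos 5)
Your specialization is correct and is exactly how the paper obtains this corollary from Theorem 3: take $S=\mathbb{N}\cup\{0\}$ with the usual (total, strict, well-) order and $\omega_i=\alpha^i$, note that every subset of $S$ is artinian and narrow so that $S$-indexed subsets of $R$ are precisely the countable ones, and observe that $\sum_{s\in S}R\omega_s(a)=\sum_{i=0}^{\infty}R\alpha^i(a)$. The paper states the corollary without further argument, and your verification of the hypotheses supplies precisely the routine details it leaves implicit.
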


\begin{cor} Let $R$ be a ring satisfying descending chain condition on right
annihilators and $\alpha\in {\rm Aut}(R)$. Then the following
conditions are equivalent:
\\\indent {\rm (1)} $R[[x,x^{-1};\alpha]]$ is a left APP-ring.
\\\indent {\rm (2)} For any countable subset $A$ of $R$, $l_R\big(\sum_{a\in A}\sum_{i=-\infty}^{\infty}R\alpha^i(a)\big)$ is
right $s$-unital.\end{cor}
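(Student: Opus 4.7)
The plan is to deduce this as a direct specialization of Theorem 3 by taking $(S,\leq) = (\mathbb{Z},\leq)$ with the usual order and $\omega \colon \mathbb{Z} \longrightarrow {\rm Aut}(R)$ defined by $\omega_n = \alpha^n$. First I would verify that $(\mathbb{Z},+,\leq)$ is a strictly totally ordered monoid and that $\omega$ is a monoid (in fact group) homomorphism, then identify $R[[x,x^{-1};\alpha]]$ with $[[R^{\mathbb{Z},\leq},\omega]]$ via the correspondence $f \longmapsto \sum_{n\in\mathbb{Z}} f(n)x^n$, noting that the artinian-narrow support condition specializes in $\mathbb{Z}$ to the familiar requirement that the series be bounded below, and that the multiplication defined on $[[R^{\mathbb{Z},\leq},\omega]]$ reproduces the twisted Laurent-series multiplication $xa = \alpha(a)x$.

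The one translation step that needs attention is to reconcile the ``countable subset'' terminology of Corollary 5 with the ``$\mathbb{Z}$-indexed subset'' language of Theorem 3. Since $(\mathbb{Z},\leq)$ is totally ordered, narrowness is automatic, and in a totally ordered set ``artinian'' coincides with ``well-ordered'', which in $\mathbb{Z}$ is equivalent to being bounded below. Every bounded-below subset of $\mathbb{Z}$ is countable, and conversely any countable subset of $R$ can be indexed by $\mathbb{N}\subseteq\mathbb{Z}$, which is itself an artinian narrow subset of $\mathbb{Z}$. Hence the two indexing conventions describe the same class of subsets of $R$.

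With these identifications, the inner sum in condition (2) of Theorem 3 becomes $\sum_{s\in S} R\omega_s(a) = \sum_{i\in\mathbb{Z}} R\alpha^i(a)$, matching condition (2) of Corollary 5 verbatim, so the equivalence follows from Theorem 3. I expect no substantive obstacle: the work lies entirely in verifying that the hypotheses of Theorem 3 are satisfied in this setting and that the two indexing conventions coincide; once this bookkeeping is done, the corollary is immediate.
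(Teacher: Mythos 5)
Your proposal is correct and matches the paper's intent exactly: the paper states this corollary without proof as an immediate specialization of Theorem 3 to $(S,\leq)=(\mathbb{Z},\leq)$ with $\omega_n=\alpha^n$, and your verification that artinian narrow subsets of $\mathbb{Z}$ are the bounded-below (hence countable) ones, so that $\mathbb{Z}$-indexed subsets of $R$ coincide with countable subsets, is precisely the bookkeeping the paper leaves implicit.
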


Let $\alpha$ and $\beta$ be ring automorphisms of $R$ such that
$\alpha\beta=\beta\alpha$. Let $S =(\mathbb{N}\cup\{0\})\times
(\mathbb{N}\cup\{0\})$ (resp. $\mathbb{Z}\times\mathbb{Z}$) be
endowed the lexicographic order, or the reverse lexicographic order,
or the product order of the usual order of $\mathbb{N}\cup\{0\}$
(resp. $\mathbb{Z}$), and define $\omega: S \longrightarrow {\rm
Aut}(R)$ via $\omega(m, n)=\alpha^{m}\beta^{n}$ for any
$m,n\in\mathbb{N}\cup\{0\}$ (resp. $m, n \in \mathbb{Z}$). Then
$[[R^{S,\leq},\omega]]=R[[x,y;\alpha,\beta]]$ (resp. $R[[x, y,
x^{-1}, y^{-1};\alpha,\beta]]$), in which
$(ax^{m}y^{n})(bx^{p}y^{q}) = a\alpha^{m}\beta^{n}(b)x^{m+p}y^{n+q}$
for any $m, n, p, q\in \mathbb{N}\cup\{0\}$ (resp. $m, n, p, q\in
\mathbb{Z}$).

\begin{cor} Let $R$ be a ring satisfying descending chain condition on right
annihilators, $\alpha$ and $\beta$ be ring automorphisms of R such
that $\alpha\beta=\beta\alpha$. Then the following conditions are
equivalent:
\\\indent {\rm (1)} $R[[x,y;\alpha,\beta]]$ (resp.
$R[[x, y, x^{-1}, y^{-1};\alpha,\beta]]$) is a left APP-ring.
\\\indent {\rm (2)} For any countable subset $A$ of $R$,
$l_R\big(\sum_{a\in
A}\sum_{i=0}^{\infty}\sum_{j=0}^{\infty}R\alpha^i\beta^j(a)\big)$
(resp. $l_R\big($ $\sum_{a\in
A}\sum_{i=-\infty}^{\infty}\sum_{j=-\infty}^{\infty}R\alpha^i\beta^j(a)\big)$)
is right $s$-unital.\end{cor}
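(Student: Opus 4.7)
The plan is to derive the corollary as a direct application of Theorem~3 to the monoid $S=(\mathbb{N}\cup\{0\})\times(\mathbb{N}\cup\{0\})$ (respectively $\mathbb{Z}\times\mathbb{Z}$) equipped with the lexicographic order, together with the monoid homomorphism $\omega(m,n)=\alpha^m\beta^n$. First I would check the hypotheses of Theorem~3. The lex order on $S$ is clearly total, and strict compatibility with addition is immediate: if $(m,n)<_{\mathrm{lex}}(m',n')$ then either $m<m'$ (so $(m+p,n+q)<_{\mathrm{lex}}(m'+p,n'+q)$ from the first coordinates) or $m=m'$ and $n<n'$ (so the conclusion follows from the second coordinates). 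The map $\omega$ is a monoid homomorphism because $\alpha\beta=\beta\alpha$ yields $\omega(m,n)\omega(p,q)=\alpha^m\beta^n\alpha^p\beta^q=\alpha^{m+p}\beta^{n+q}=\omega(m+p,n+q)$. The identification $[[R^{S,\leq},\omega]]=R[[x,y;\alpha,\beta]]$ (respectively the Laurent version) is supplied by the paragraph preceding the corollary.

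Next I would match ``$S$-indexed subset of $R$'' with ``countable subset of $R$''. One direction is immediate, since $S$ itself is countable. For the converse, any countable $A=\{a_n\}_{n\geq 0}\subseteq R$ can be indexed by the subset $I=\{(n,0):n\in\mathbb{N}\cup\{0\}\}$ of $S$, which is a chain (hence narrow) and well-ordered in the lexicographic order (hence artinian). Under this translation, the sum $\sum_{s\in S}R\omega_s(a)$ unfolds as $\sum_{(i,j)\in\mathbb{N}_0\times\mathbb{N}_0}R\alpha^i\beta^j(a)$ in the first case, and as $\sum_{(i,j)\in\mathbb{Z}\times\mathbb{Z}}R\alpha^i\beta^j(a)$ in the second, matching condition~(2) of the corollary. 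Combined with the hypothesis checks above, Theorem~3 then gives the stated equivalence.

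The only non-routine point is that the corollary also allows the product order, which is not total, so Theorem~3 does not literally apply to it. I would handle this by observing that on $(\mathbb{N}\cup\{0\})\times(\mathbb{N}\cup\{0\})$ every subset is artinian and narrow under any of the three orders (Dickson's lemma for the product order, a direct descending-chain check for the lex and reverse lex orders), so all three orders yield the same set of admissible maps $S\to R$ and therefore the same ring; the lex case already covered thus suffices. For $\mathbb{Z}\times\mathbb{Z}$ one interprets the Laurent ring via the lex order (to which Theorem~3 directly applies), and the indexing $\{(n,0):n\geq 0\}$ of the previous paragraph remains artinian and narrow in all three orders, so the translation of condition~(2) is unaffected. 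I do not expect this step to hide any real difficulty beyond the bookkeeping of the three orders.
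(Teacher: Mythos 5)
Your proposal is correct and follows the same route the paper intends: the corollary is a direct specialization of Theorem~3 to $S=(\mathbb{N}\cup\{0\})^2$ (resp.\ $\mathbb{Z}\times\mathbb{Z}$) with the lexicographic order and $\omega(m,n)=\alpha^m\beta^n$, using the identification $[[R^{S,\leq},\omega]]=R[[x,y;\alpha,\beta]]$ given in the paragraph preceding the corollary and the observation that $S$-indexed subsets coincide with countable subsets here. Your extra remarks on the product order (which is not total, so Theorem~3 does not literally apply to it) are a sensible and slightly more careful treatment of a point the paper glosses over.
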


If $S$ the multiplicative monoid $(\mathbb{N},\cdot)$, endowed with
the usual order $\leq$, then $[[R^{(\mathbb{N},\cdot),\leq}]]$ is
the ring of arithmetical functions with values in $R$, endowed with
the Dirichlet convolution:$$(fg)(n)=\sum_{d\mid
n}f(d)g(n/d),\quad\text{for each } n\geq 1.$$

\begin{cor} Let $R$ be a ring satisfying descending chain condition on right
annihilators. Then the following conditions are equivalent:
\\\indent {\rm (1)} $[[R^{(\mathbb{N},\cdot),\leq}]]$ is a left APP-ring.
\\\indent {\rm (2)} For any countable subset $A$ of $R$,
$l_R\big(\sum_{a\in A}Ra\big)$ is right $s$-unital.\end{cor}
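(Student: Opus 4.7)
The plan is to derive this corollary as a direct specialization of Theorem 3, taking $(S,\leq) = ((\mathbb{N},\cdot),\leq)$ and $\omega : S \to \mathrm{Aut}(R)$ the trivial monoid homomorphism $\omega_s = \mathrm{id}_R$ for every $s \in S$. Under this choice $[[R^{S,\leq},\omega]]$ is precisely the Dirichlet-convolution ring $[[R^{(\mathbb{N},\cdot),\leq}]]$ described just above the statement.

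First I would verify that the hypotheses of Theorem 3 are met. The pair $(\mathbb{N},\cdot)$ is a commutative monoid with identity $1$, and the usual order is total. For strictness: if $s < s'$ and $t \in \mathbb{N}$ then $s't - st = (s'-s)t \ge 1 > 0$, hence $st < s't$. The DCC on right annihilators is hypothesized.

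Next I would translate condition (2) of Theorem 3 under this specialization. Since $\omega_s = \mathrm{id}_R$ for every $s$, the sum $\sum_{s \in S} R\omega_s(a)$ collapses to $Ra$, so $l_R\bigl(\sum_{a \in A}\sum_{s \in S} R\omega_s(a)\bigr) = l_R\bigl(\sum_{a \in A} Ra\bigr)$, which is exactly the ideal appearing in Corollary 7(2). I would then identify ``$S$-indexed subset of $R$'' with ``countable subset of $R$'' for this particular $S$: since $(\mathbb{N},\le)$ is well-ordered every subset of $\mathbb{N}$ is artinian, and since the order is total every subset is narrow, so every subset $I \subseteq \mathbb{N}$ is automatically artinian and narrow. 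Because $\mathbb{N}$ is countable any such $I$ is at most countable; conversely any countable subset of $R$ can be enumerated by some $I \subseteq \mathbb{N}$ and is thus $S$-indexed. Under this identification, Theorem 3(1)$\Leftrightarrow$(2) becomes Corollary 7(1)$\Leftrightarrow$(2).

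There is no substantive obstacle here; the proof is a routine bookkeeping verification that the hypotheses of Theorem 3 hold and that its conclusion, once specialized, coincides with the statement of the corollary. The only point requiring explicit mention is the coincidence of ``$S$-indexed'' with ``countable,'' which is immediate from the well-ordering and countability of $(\mathbb{N},\le)$.
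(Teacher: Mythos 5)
Your proposal is correct and is precisely the argument the paper intends: the corollary is stated without proof as an immediate specialization of Theorem 3 to $S=(\mathbb{N},\cdot)$ with the usual (total, strict) order and the trivial homomorphism $\omega_s=\mathrm{id}_R$, under which $\sum_{s\in S}R\omega_s(a)=Ra$ and ``$S$-indexed'' coincides with ``countable.'' Your explicit verification of the strict-order condition and of the artinian-and-narrow identification is exactly the routine bookkeeping the paper leaves to the reader.
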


 Let $(S,\leq)$ be a strictly totally ordered monoid
which is also artinian. Then the set $X_s = \{(u, v)|u+v = s, u, v
\in S\}$ is finite for any $s\in S$. Let $V$ be a free Abelian
additive group with the base consisting of elements of $S$. It was
noted in \cite{liu-comm-00} that V is a coalgebra over $\mathbb{Z}$
with the comultiplication map and the counit map as follows:
$$\triangle(s)=\sum_{(u,v)\in X_{s}}u\otimes v,\qquad \epsilon(s)=\bigg\{\begin{array}{cc}
  1,              & s=0 ,\\
  0,                  & s\neq 0, \\
\end{array}$$and $[[R^{S,\leq}]] \cong {\rm Hom}(V,R)$, the dual algebra with multiplication
$$f\ast g=(f\otimes g)\triangle \;\;\;\; \forall f,g\in {\rm Hom}(V,R).$$

\begin{cor} Let $(S,\leq)$ be a strictly totally ordered monoid
which is also artinian, $R$ a ring satisfying descending chain
condition on right annihilators  and ${\rm Hom}(V,R)$ defined as
above. Then the following conditions are equivalent:
\\\indent {\rm (1)} ${\rm Hom}(V,R)$ is a left APP-ring.
\\\indent {\rm (2)} For any $S$-indexed subset $A$ of $R$,
$l_R\big(\sum_{a\in A}Ra\big)$ is right $s$-unital.\end{cor}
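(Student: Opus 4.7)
The plan is to deduce Corollary 8 directly from Theorem 3 by specializing to the trivial action. The only difference between ${\rm Hom}(V,R) \cong [[R^{S,\leq}]]$ and the skew generalized power series ring $[[R^{S,\leq},\omega]]$ of Theorem 3 is that here the action $\omega$ is implicitly trivial (no twisting appears in the Dirichlet-type convolution). So I would first make this identification precise by taking $\omega : S \longrightarrow {\rm Aut}(R)$ to be the constant monoid homomorphism sending every $s \in S$ to ${\rm id}_R$. This is a monoid homomorphism into ${\rm Aut}(R)$, and with this choice the multiplication in $[[R^{S,\leq},\omega]]$ reduces to the ordinary convolution, so that $[[R^{S,\leq},\omega]] = [[R^{S,\leq}]] \cong {\rm Hom}(V,R)$ as rings.

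Next I would observe the resulting simplification of condition (2) in Theorem 3. Since $\omega_s(a) = a$ for every $s \in S$ and every $a \in R$, we have
\[
\sum_{a\in A}\sum_{s\in S} R\omega_s(a) \;=\; \sum_{a\in A} Ra
\]
for any $S$-indexed subset $A$ of $R$. Hence the right $s$-unital condition of Theorem 3(2) is exactly the right $s$-unital condition stated in Corollary 8(2).

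Finally, since all the standing hypotheses of Theorem 3 are satisfied — $(S,\leq)$ is strictly totally ordered (the artinian assumption is extra and not needed for the implication itself, though it is what makes the coalgebra reformulation ${\rm Hom}(V,R)$ meaningful), $\omega$ maps into ${\rm Aut}(R)$, and $R$ has DCC on right annihilators — Theorem 3 gives the equivalence of (1) and (2) immediately.

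I do not anticipate any genuine obstacle: the corollary is a pure specialization, and no new estimates or constructions are required. The only thing to be careful about is the verification that the trivial map $S \to {\rm Aut}(R)$ really qualifies as the $\omega$ hypothesized in Theorem 3, and that the identification $[[R^{S,\leq},\omega]] \cong {\rm Hom}(V,R)$ (via the isomorphism $[[R^{S,\leq}]] \cong {\rm Hom}(V,R)$ recalled just before the corollary) preserves the left APP property — both of which are routine.
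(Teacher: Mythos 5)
Your proposal is correct and matches the paper's (implicit) argument exactly: Corollary 8 is a direct specialization of Theorem 3 to the trivial homomorphism $\omega_s = {\rm id}_R$, under which $\sum_{a\in A}\sum_{s\in S}R\omega_s(a)$ collapses to $\sum_{a\in A}Ra$, with the artinian hypothesis on $S$ serving only to make the identification $[[R^{S,\leq}]]\cong {\rm Hom}(V,R)$ available.
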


\vskip 3mm \centerline{ {\bf ACKNOWLEDGMENT} } \vskip 3mm The author
wishes to express his sincere thanks to the referee for his/her
valuable suggestions.

\vspace*{3mm}

\end{document}